\newtheorem{theorem}{Theorem}[section]
\newtheorem{lemma}[theorem]{Lemma}
\newtheorem{corollary}[theorem]{Corollary}
\theoremstyle{definition}
\theoremstyle{remark}
\numberwithin{equation}{section}
\def\bfa{{\mathbf a}}
\def\bfb{{\mathbf b}}
\def\bfu{{\mathbf u}}
\def\bfx{{\mathbf x}}
\def\calC{{\mathcal C}}
\def\calI{{\mathcal I}}
\def\calS{{\mathcal S}}
\def\calT{{\mathcal T}} \def\Ttil{{\widetilde T}}
\def\dbN{{\mathbb N}}
\def\dbR{{\mathbb R}}
\def\dbZ{{\mathbb Z}}\def\dbQ{{\mathbb Q}}
\def\grA{{\mathfrak A}}
\def\grI{{\mathfrak I}}
\def\grm{{\mathfrak m}}\def\grM{{\mathfrak M}}
\def\grn{{\mathfrak n}}
\def\grK{{\mathfrak K}}
\def\alp{{\alpha}} \def\bfalp{{\boldsymbol \alpha}}
\def\bet{{\beta}}  \def\bfbet{{\boldsymbol \beta}}
\def\gam{{\gamma}} 
\def\del{{\delta}} \def\Del{{\Delta}}
\def\tet{{\theta}}  \def\Tet{{\Theta}}
\def\kap{{\kappa}}
\def\lam{{\lambda}}
\def\d{{\partial}}
\def\eps{\varepsilon}
\def\le{\leqslant} \def\ge{\geqslant}
\def\d{{\,{\rm d}}}
\begin{document}
\title[Odd cubic Weyl sums]{Mean value estimates for odd cubic Weyl sums}
\author[Trevor D. Wooley]{Trevor D. Wooley}
\address{School of Mathematics, University of Bristol, University Walk, Clifton, Bristol BS8 1TW, United 
Kingdom}
\email{matdw@bristol.ac.uk}
\subjclass[2010]{11L15, 11L07, 11P55}
\keywords{Exponential sums, Hardy-Littlewood method}
\date{}
\begin{abstract} We establish an essentially optimal estimate for the ninth moment of the exponential sum 
having argument $\alp x^3+\bet x$. The first substantial advance in this topic for over $60$ years, this 
leads to improvements in Heath-Brown's variant of Weyl's inequality, and other applications of Diophantine 
type.\end{abstract}
\maketitle

\section{Introduction} This memoir concerns the mean values
$$I_s(X)=\int_0^1\int_0^1\Bigl| \sum_{1\le x\le X}e(\alp x^3+\bet x)\Bigr|^s\d\alp\d\bet ,$$
where $e(z)=e^{2\pi iz}$. Estimates for $I_s(X)$ make an appearance in the literature as early as 1947, 
when L.-K. Hua \cite[Lemma 4.3 and Theorem 6]{Hua1947} showed that
\begin{equation}\label{1.1}
I_6(X)\ll X^3(\log 2X)^9\quad \text{and}\quad I_{10}(X)\ll_\eps X^{6+\eps}.
\end{equation}
These mean values have more recently been applied to obtain improvements in Weyl's inequality and 
Waring's problem (see \cite{Bok1994, HB1988}), and also in investigations concerning the integral solubility 
of diagonal cubic equations subject to a linear slice (see \cite{BR2014}). Presumably, one should in general 
have the upper bound $I_s(X)\ll X^{s/2}+X^{s-4}$, but hitherto, the best available estimates for 
$I_s(X)$ are little better than those obtained from Hua's bounds (\ref{1.1}) via H\"older's inequality. By 
applying the cubic case of the main conjecture in Vinogradov's mean value theorem, recently established in 
\cite{Woo2014b}, we are now able to obtain estimates for $I_s(X)$ substantially sharper than these earlier 
bounds.

\begin{theorem}\label{theorem1.1}
For each $\eps>0$, one has $I_8(X)\ll X^{13/3+\eps}$ and $I_9(X)\ll X^{5+\eps}$.
\end{theorem}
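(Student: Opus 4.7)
My plan is to establish $I_9(X) \ll X^{5+\eps}$ first, and then to deduce $I_8(X) \ll X^{13/3+\eps}$ by a single application of H\"older's inequality. Writing $|f|^8 = |f|^2 \cdot |f|^6$ and applying H\"older with conjugate exponents $3$ and $3/2$ yields
$$I_8(X) \le I_6(X)^{1/3} I_9(X)^{2/3},$$
and combining this with Hua's bound $I_6(X) \ll X^{3+\eps}$ from \eqref{1.1} immediately gives $I_8(X) \ll X^{1+\eps} \cdot X^{10/3+\eps} = X^{13/3+\eps}$. Thus the whole matter reduces to proving the ninth moment estimate.

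For $I_9(X) \ll X^{5+\eps}$ I would perform a Hardy--Littlewood dissection of $[0,1]^2$ into major arcs $\grM$---narrow neighborhoods of rationals $(a/q, b/r)$ with $q,r$ bounded by suitable powers of $X$---and complementary minor arcs $\grm$. On $\grM$, a standard singular series/singular integral analysis for $f(\alp,\bet)$ delivers a contribution of the anticipated order $X^{5+\eps}$. The substantive task is the minor-arc bound $\int_\grm |f|^9 \d\alp \d\bet \ll X^{5+\eps}$, and it is here that Vinogradov's cubic mean value $J_{6,3}(X) \ll X^{6+\eps}$ from \cite{Woo2014b} is brought to bear. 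The connection comes via the identification $f(\alp,\bet) = F(\bet, 0, \alp)$, where $F(\bftet) = \sum_{1\le x\le X} e(\tet_1 x + \tet_2 x^2 + \tet_3 x^3)$ is the three-dimensional Vinogradov sum; a Weyl-differencing identity
$$|f(\alp,\bet)|^2 = \sum_{|h|<X} e(\alp h^3 + \bet h) \sum_y e(3\alp h y^2 + 3\alp h^2 y)$$
exposes an inner quadratic Weyl sum in $y$ whose higher moments, after suitable manipulation over $\alp$ and $\bet$, should be controlled by $J_{6,3}$.

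The principal obstacle is precisely this transfer from the 3-torus Vinogradov bound down to the 2-torus integral on which $f$ lives. A first consequence of $J_{6,3}$ is $I_{12}(X) \ll X^{8+\eps}$: each Fourier coefficient of $|F|^{12}$ at a point $(0,d,0)$ is bounded above by $J_{6,3}(X)$ via the triangle inequality, and summing over the $O(X^2)$ admissible values of the quadratic discrepancy $d = \sum x_i^2 - \sum y_i^2$ yields this bound. However, H\"older interpolation among $I_6$, $I_{10}$, and this new $I_{12}$ produces only $I_9(X) \ll X^{21/4+\eps}$ (the best choice being $I_9 \le I_6^{1/4} I_{10}^{3/4}$), which falls short of the target by a factor $X^{1/4}$. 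Closing this gap demands a lossless use of $J_{6,3}$ on the minor arcs, whereby the Diophantine structure of $(\alp,\bet)\in\grm$ together with the Weyl-differencing identity above must be marshalled so that the full cubic Vinogradov cancellation survives the restriction to the slice $\{\tet_2 = 0\}$.
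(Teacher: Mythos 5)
Your reduction of $I_8$ to $I_9$ via H\"older against $I_6$ is exactly the paper's step, and your diagnosis of the central difficulty is accurate --- but you stop precisely where the proof has to begin. The bound $I_{12}(X)\ll X^{8+\eps}$ obtained by summing $J_{6,3}$ over the $O(X^2)$ values of the quadratic discrepancy is correct but, as you note yourself, cannot yield better than $X^{21/4}$ for $I_9$ by interpolation. The missing idea is that one should \emph{not} dissect $[0,1]^2$ into two-dimensional major and minor arcs with a singular-series analysis; instead one dissects only in the cubic variable $\alp$, at the very coarse level $Q\asymp X$, and proves the restricted twelfth moment $\int_0^1\int_{\grm(X)}|g(\alp,\bet)|^{12}\d\alp\d\bet\ll X^{7+\eps}$ --- a saving of a full power of $X$ over your $I_{12}$ bound. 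This is the paper's Theorem \ref{theorem2.1}, and the ``lossless transfer'' you ask for is supplied by the argument of \cite[Theorem 2.1]{Woo2012b}: on the minor arcs in the leading coefficient the missing quadratic variable costs only $X^{k-2}=X$ rather than $X^2$, giving $X(\log X)^{13}J_{6,3}(2X)\ll X^{7+\eps}$. Schwarz against $I_6(X)\ll X^{3+\eps}$ then gives the minor-arc ninth moment $\ll X^{5+\eps}$ at once.

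The complementary major arcs $\grM(X)$ reach all the way to $q\le X$, far beyond the range where any singular-series treatment applies, so your sketch of that side would also fail as stated. The paper instead proves $\int_0^1\int_{\grM(X)}|g(\alp,\bet)|^8\d\alp\d\bet\ll X^{4+\eps}$ by combining the pointwise Weyl bound $|g(\alp,\bet)|^4\ll X^{4+\eps}\Psi(\alp)$ with the fourth-moment estimate $\int_0^1|g(\alp,\bet)|^4\d\bet\ll X^{3+\eps}\Psi(\alp)$, the latter resting on the factorization $x_1^3+x_2^3-x_3^3-x_4^3=3(x_1+x_2)(x_1-x_3)(x_2-x_3)$ on the slice $x_1+x_2=x_3+x_4$ together with a standard reciprocal-sum lemma; the ninth moment over $\grM(X)$ then follows trivially from $|g|\le X$. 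Neither of these two pillars --- the minor-arc twelfth moment nor the major-arc eighth moment --- is actually established in your proposal, so the argument as written does not close.
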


By orthogonality, the mean value $I_6(X)$ counts the number of integral solutions of the system
$$\sum_{i=1}^3(x_i^3-y_i^3)=\sum_{i=1}^3(x_i-y_i)=0,$$
with $1\le x_i,y_i\le X$ $(1\le i\le 3)$. These simultaneous equations, defining the so-called Segre cubic 
(see \cite{Seg1887}) has been the focus of vigorous investigation in recent years. Vaughan and Wooley 
\cite{VW1995} showed that
\begin{equation}\label{1.2}
I_6(X)=6X^3+U(X),
\end{equation}
where $U(X)\asymp X^2(\log X)^5$, and de la Bret\`eche \cite{dlB2007} has obtained an asymptotic 
formula for $U(X)$ of the shape $U(X)\sim CX^2(\log X)^5$, for a suitable positive constant $C$. By 
interpolating between (\ref{1.2}) and the $10^{\text{th}}$-moment of Br\"udern and Robert 
\cite[Theorem 2]{BR2014}, one would obtain the estimates
$$I_8(X)\ll X^{9/2}(\log X)^{-1}\quad \text{and}\quad I_9(X)\ll X^{21/4}(\log X)^{-3/2}.$$ 
These estimates are sharper by a factor $X^\eps$ than the estimates that would stem from Hua's bounds 
(\ref{1.1}), whereas our new estimates save $X^{1/6-\eps}$ and $X^{1/4-\eps}$ in the respective cases. 
Indeed, our new bound $I_9(X)\ll X^{5+\eps}$ falls short of the best possible bound $I_9(X)\ll X^5$ only 
by a factor $X^\eps$.\par

The estimates recorded in Theorem \ref{theorem1.1} are consequences of a minor arc bound that will likely 
be of greater utility than the former in applications of the Hardy-Littlewood method. In order to describe 
bounds of this type, we must introduce some additional notation. When $Q$ is a real number with 
$1\le Q\le X^{3/2}$, we define the major arcs $\grM(Q)$ to be the union of the intervals
$$\grM(q,a)=\{ \alp \in [0,1): |q\alp-a|\le QX^{-3}\},$$
with $0\le a\le q\le Q$ and $(a,q)=1$. We then define the complementary set of minor arcs $\grm(Q)$ by 
putting $\grm(Q)=[0,1)\setminus \grM(Q)$. Finally, we define the exponential sum 
$g(\alp,\bet)=g(\alp,\bet;X)$ by
\begin{equation}\label{1.3}
g(\alp,\bet;X)=\sum_{1\le x\le X}e(\alp x^3+\bet x),
\end{equation}
and define $I_s^*(X;Q)$ for $s\in \dbN$ by putting
\begin{equation}\label{1.4}
I_s^*(X;Q)=\int_0^1 \int_{\grm(Q)}|g(\alp,\bet)|^s\d\alp \d\bet .
\end{equation}

\begin{theorem}\label{theorem1.2}
Let $Q$ be a real number with $1\le Q\le X$. Then for each $\eps>0$, one has the estimates
$$I_{10}^*(X;Q)\ll X^{6+\eps}Q^{-1/3}\quad \text{and}\quad I_{12}^*(X;Q)\ll X^{8+\eps}Q^{-1}.$$
\end{theorem}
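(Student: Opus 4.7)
The principal new input is the cubic case of Vinogradov's mean value theorem established in \cite{Woo2014b}, which yields $J_{6,3}(X)\ll X^{6+\eps}$ and $J_{5,3}(X)\ll X^{5+\eps}$ for the three-variable cubic Weyl sum $f(\alp,\gam,\bet)=\sum_{1\le x\le X}e(\alp x^3+\gam x^2+\bet x)$, where $J_{s,3}(X)=\int_{[0,1]^3}|f(\alp,\gam,\bet)|^{2s}\d\alp\d\gam\d\bet$ counts solutions of $\sum x_i^j=\sum y_i^j$ for $j=1,2,3$. Since $I_{2s}(X)$ counts only the system $j\in\{1,3\}$, my plan is to introduce the missing coefficient $\gam$ of $x^2$ through a translation argument on $g(\alp,\bet)=f(\alp,0,\bet)$, and then to combine the resulting three-variable mean value with a Weyl-type pointwise bound on $\grm(Q)$.

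For integer $h\in[1,H]$, the substitution $x\mapsto x+h$ in $g$ gives the exact identity
$$g(\alp,\bet)=e(\alp h^3+\bet h)\,\widetilde f_h(\alp,3\alp h,3\alp h^2+\bet),$$
where $\widetilde f_h$ is the cubic sum over the translated range $[1-h,X-h]$, differing from $f(\alp,3\alp h,3\alp h^2+\bet)$ by at most $2h$ boundary terms. Raising to twelfth powers, averaging over $h$, using translation-invariance of the $\bet$-integral to absorb the shift $3\alp h^2$, and then integrating over $\alp\in\grm(Q)$ yields
$$I_{12}^*(X;Q)\ll H^{-1}\sum_{h=1}^{H}\int_{\grm(Q)}\int_0^1|f(\alp,3\alp h,\bet')|^{12}\d\bet'\d\alp+H^{12}.$$

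The inner integrals should then be estimated by combining a pointwise Weyl-type bound on $\grm(Q)$, giving $|f(\alp,\gam,\bet)|\ll X^{1+\eps}Q^{-1/4}$ uniformly in $(\gam,\bet)$, with the three-dimensional mean-value bound $J_{6,3}(X)\ll X^{6+\eps}$. The critical step is to convert the discrete $h$-average into a genuine $\gam$-integration via the change of variables $\alp\mapsto 3\alp h\pmod 1$, which is a $3h$-to-one covering of the torus: this permits the VMVT to be deployed in its natural three-dimensional form. Choosing $H$ as a suitable small power of $Q$ so as to balance the error term $H^{12}$ against the main estimate should deliver $I_{12}^*(X;Q)\ll X^{8+\eps}Q^{-1}$. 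The companion bound for $I_{10}^*$ follows by the parallel argument with $J_{5,3}(X)\ll X^{5+\eps}$ in place of $J_{6,3}$, or by H\"older interpolation between Hua's $I_6(X)\ll X^{3+\eps}$ and the 12th-moment bound.

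The main obstacle is recovering the full $Q^{-1}$ (respectively $Q^{-1/3}$) saving rather than only $Q^{-1/2}$: a naive combination of the pointwise Weyl bound with the three-dimensional mean value yields only the square root of the required saving, and extracting the extra factor requires careful control of the discrepancy in the $h$-average and of the interaction between $\grm(Q)$ and the multiplication-by-$3h$ map on $\dbT$. This step likely invokes Heath-Brown's refinement of Weyl's inequality---mentioned in the introduction as being improved by the present theorem---or a large-sieve inequality tailored to the arithmetic of the minor arc set.
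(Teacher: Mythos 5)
Your first step --- introducing the missing $x^2$ coefficient by the shift $x\mapsto x+h$ and averaging over $h$ so as to import the cubic Vinogradov bound $J_{6,3}(X)\ll X^{6+\eps}$ --- is indeed the engine of the paper's argument, but the paper only runs it at the single scale $Q\asymp X$ (this is Theorem \ref{theorem2.1}, obtained by quoting the shift-and-average argument of \cite[Theorem 2.1]{Woo2012b}, which gives $I_{12}^*(X;X)\ll X(\log X)^{13}J_{6,3}(2X)\ll X^{7+\eps}$). The gap in your proposal is exactly the point you flag at the end: you have no mechanism that actually produces the factor $Q^{-1}$, and the devices you guess at (Heath-Brown's refinement, a large sieve) are not what is used --- Heath-Brown's inequality is an \emph{application} of the theorem, not an ingredient. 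Moreover your difficulty is real: for $\alp$ close to a rational with denominator $q\approx Q$ small, the points $3\alp h\pmod 1$ occupy only about $Q$ residue classes, so the $h$-average cannot be converted into a full $\gam$-integration with acceptable multiplicity, and the naive pointwise-Weyl route loses a square root, as you note. Your fallback for $I_{10}^*$ also falls short: H\"older between $I_6\ll X^{3+\eps}$ and $I_{12}^*\ll X^{8+\eps}Q^{-1}$ gives only $X^{19/3+\eps}Q^{-2/3}$, which is weaker than $X^{6+\eps}Q^{-1/3}$ throughout $1\le Q<X$.

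The missing idea is a pair of sharp \emph{major-arc} mean values. The paper writes $\grm(Q)=\grm(X)\cup\grK(Q)$ with $\grK(Q)=\grM(X)\setminus\grM(Q)$; on $\grm(X)$ one uses $I_{12}^*(X;X)\ll X^{7+\eps}\le X^{8+\eps}Q^{-1}$, and on $\grK(Q)$ one bounds
$$\Bigl(\sup_{\alp\in\grK(Q)}|g(\alp,\bet)|\Bigr)^4\int_0^1\int_{\grM(X)}|g(\alp,\bet)|^8\d\alp\d\bet\ll (X^{4+\eps}Q^{-1})(X^{4+\eps}).$$
The second factor, $\int_0^1\int_{\grM(X)}|g|^8\ll X^{4+\eps}$ (Lemma \ref{lemma2.3}), is the substantive new input you are missing: it does not follow from any full-torus moment, but from the $\bet$-averaged fourth moment $\int_0^1|g(\alp,\bet)|^4\d\bet\ll X^{3+\eps}\Psi(\alp)$ (Lemma \ref{lemma2.2}), which in turn exploits the factorisation $x_1^3+x_2^3-x_3^3-x_4^3=-3(x_1+x_2)(x_1-x_3)(x_2-x_3)$ on the hyperplane $x_1+x_2=x_3+x_4$ cut out by the linear integration, reducing matters to a reciprocal-sum estimate. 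Multiplying by the pointwise Weyl bound $|g|^4\ll X^{4+\eps}\Psi(\alp)$ and integrating $\Psi(\alp)^2$ over $\grM(X)$ gives the $X^{4+\eps}$. Finally, $I_{10}^*$ is obtained by H\"older between $I_{12}^*(X;Q)$ and the \emph{ninth} moment $I_9(X)\ll X^{5+\eps}$ of Theorem \ref{theorem1.1} (itself a consequence of the same two ingredients), not the sixth. Without Lemmas \ref{lemma2.2} and \ref{lemma2.3}, or an equivalent way of showing the eighth moment over $\grM(X)$ is essentially $X^4$, your argument cannot close.
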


When $X^{3/4}\le Q\le X^{4/5}$, one finds from Br\"udern and Robert \cite[Theorem 2]{BR2014} that 
$I_{10}^*(X;Q)\ll X^6(\log X)^{-2}$, which saves a factor $(\log X)^2$ over the lower bound of order 
$X^6$ for the corresponding major arc estimate. Theorem \ref{theorem1.2}, meanwhile, would save a 
power of $X$. Indeed, since $I_9(X)\gg X^5$, the bound $I_{12}^*(X;X)\ll X^{7+\eps}$, that stems from 
Theorem \ref{theorem1.2}, can be construed as supplying a Weyl estimate $g(\alp,\bet)\ll X^{2/3+\eps}$ 
on average for $\alp\in \grm(X)$. A direct application of Weyl's inequality (see \cite[Lemma 2.4]{Vau1997}) 
would show only that $g(\alp,\bet)\ll X^{3/4+\eps}$.\par

We would argue that the progress represented in our improved estimates for moments of $g(\alp,\bet)$ 
justifies an account based on its merit alone. However, we take this opportunity to record an application of 
Theorem \ref{theorem1.2} to Heath-Brown's variant of Weyl's inequality. In this context, when $k$ is a 
natural number, we consider the exponential sum $f(\alp)=f_k(\alp;X)$ defined by
$$f_k(\alp;X)=\sum_{1\le x\le X}e(\alp x^k).$$

\begin{theorem}\label{theorem1.3}
Let $k\ge 6$, and suppose that $\alp\in \dbR$, $a\in \dbZ$ and $q\in \dbN$ satisfy $(a,q)=1$ and 
$|\alp-a/q|\le q^{-2}$. Then for each $\eps>0$, one has
$$f_k(\alp;X)\ll X^{1+\eps}\Tet^{2^{-k}}+X^{1+\eps}(\Tet/X)^{\frac{2}{3}2^{-k}},$$
where $\Tet=q^{-1}+X^{-3}+qX^{-k}$.
\end{theorem}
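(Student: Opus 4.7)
The plan is to follow the scheme of Heath-Brown \cite{HB1988}, in which $k-3$ rounds of Weyl differencing reduce estimates for $f_k(\alp;X)$ to estimates for cubic Weyl sums, but to replace Heath-Brown's use of the classical Weyl inequality for cubic sums with the sharper mean value bound of Theorem \ref{theorem1.2}. The factor $\Tet^{2^{-k}}$ should arise from a standard major-arc treatment of the derived cubic sum, while the factor $(\Tet/X)^{(2/3)\cdot 2^{-k}}$ should reflect the bound $I_{12}^*(X;Q)\ll X^{8+\eps}Q^{-1}$ of Theorem \ref{theorem1.2}, which, as the remark following that theorem notes, delivers a cubic estimate of strength $X^{2/3+\eps}$ on average; the exponent $2/3=8/12$ then propagates via the $k-3$ Weyl differences to the $(2/3)\cdot 2^{-k}$ appearing in the claim.

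Concretely, I would first apply Weyl's differencing lemma $k-3$ times to obtain
$$|f_k(\alp;X)|^{2^{k-3}}\ll X^{2^{k-3}-1}+X^{2^{k-3}-(k-3)-1}\sum_{\bfh}|T_{\bfh}(\alp)|,$$
where $\bfh=(h_1,\ldots,h_{k-3})\in\dbZ^{k-3}$ runs over $1\le|h_j|\le X$, and $T_{\bfh}(\alp)=\sum_{x\in I(\bfh)}e(\alp p_{\bfh}(x))$ is the exponential sum of a cubic polynomial $p_{\bfh}(x)\in\dbZ[x]$ with leading coefficient $a(\bfh)=(k!/6)h_1\cdots h_{k-3}$, taken over a subinterval $I(\bfh)\subseteq[1,X]$. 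An integer shift $x\mapsto x-t_{\bfh}$, with $t_{\bfh}$ the nearest integer to $-b(\bfh)/(3a(\bfh))$ (where $b(\bfh)$ denotes the quadratic coefficient of $p_{\bfh}$), eliminates the quadratic term up to a bounded integer residual; after handling that residual via an additional Cauchy--Schwarz step, each $T_{\bfh}(\alp)$ is comparable to $g(\alp a(\bfh),\bet_{\bfh};X)$ for a suitable secondary frequency $\bet_{\bfh}$.

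With $Q$ a parameter to be optimised, I would then partition the $\bfh$-box according to whether the multiplied frequency $\alp a(\bfh)$ lies in $\grM(Q)$ or $\grm(Q)$. On the minor-arc part, an application of H\"older's inequality inserts a twelfth power of $|g(\cdot,\cdot;X)|$, which is then bounded by $I_{12}^*(X;Q)$ through Theorem \ref{theorem1.2}; on the major-arc part, one uses the standard approximation of $g$ by its Gauss-sum / exponential-integral main term. Combining the two contributions and optimising $Q$ against $\Tet$ should yield the two terms in the statement.

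The main obstacle is coordinating the Diophantine structure of $\alp$---encoded by $\Tet=q^{-1}+X^{-3}+qX^{-k}$---with the scaled frequencies $\alp a(\bfh)$ obtained by multiplying $\alp$ by the many-factor product $a(\bfh)=(k!/6)h_1\cdots h_{k-3}$. One must show that, apart from a controlled exceptional set of $\bfh$ bounded by divisor-type estimates, the frequencies $\alp a(\bfh)$ lie on $\grm(Q)$ for $Q$ as large as the hypothesis on $\alp$ permits, so that Theorem \ref{theorem1.2} applies uniformly over the generic $\bfh$. Carrying out this bookkeeping without losing powers of $X$ elsewhere is the delicate part.
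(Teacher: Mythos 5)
Your skeleton is the right one---Heath-Brown's differencing identity reducing $f_k(\alp;X)$ to a sum of cubic sums $T(\alp h)$, combined with the new cubic mean values---and you correctly trace the exponent $\tfrac{2}{3}=\tfrac{8}{12}$ to the twelfth-moment minor-arc bound. But two of your steps would fail as described. First, the major-arc contribution is not a routine matter of approximating $g$ by its Gauss-sum/integral main term: it is precisely the source of the first term $X^{1+\eps}\Tet^{2^{-k}}$, and a pointwise major-arc approximation is unavailable here, because only the cubic frequency $\xi=\alp a(\bfh)$ is constrained to a major arc while the linear frequency $\eta$ is arbitrary, and because the relevant denominators may be as large as $X$. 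What is actually needed is the eighth-moment major-arc estimate $\int_0^1\int_{\grM(X)}|g|^8\,\d\alp\,\d\bet\ll X^{4+\eps}$ of Lemma \ref{lemma2.3}, which rests in turn on the fourth-moment-in-$\bet$ bound $\int_0^1|g(\alp,\bet)|^4\d\bet\ll X^{3+\eps}\Psi(\alp)$ of Lemma \ref{lemma2.2}; applying H\"older with exponent $8$ over the major-arc family of $h$ is what converts the multiplicity count into $\Tet^{1/8}$, hence into $\Tet^{2^{-k}}$ after extracting the $2^{k-3}$-th root. Your plan omits this ingredient entirely, and without it you cannot do better than Heath-Brown's $(X\Tet)^{\frac{4}{3}2^{-k}}$ on that range.

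Second, your proposed bookkeeping---showing that, outside a divisor-bounded exceptional set, the frequencies $\alp a(\bfh)$ lie on $\grm(Q)$---is the wrong dichotomy: neither family of $h$ is exceptional, and each contributes one of the two terms of the theorem. The correct device is Heath-Brown's Lemma 6: partition $[0,1)$ into boxes $\grI(m)$ of width $X^{-3}$ and bound the number $K(m)$ of $h\le \kap X^{k-3}$ with $\alp h$ lying in $\grI(m)$ modulo $1$ uniformly by $K(m)\ll \Tet X^{k-3}$; then apply H\"older separately over the $h$ whose box meets $\grM(\tfrac{1}{12}P)$ (exponent $8$, Lemma \ref{lemma2.3}) and over the remainder (exponent $12$, Theorem \ref{theorem2.1}), converting each weighted sum over boxes into the corresponding arc-restricted moment of $g$. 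Relatedly, before any of this one must replace each differenced sum by an average of $|g|$ over a small box in \emph{both} variables (this is (\ref{3.2}), i.e.\ Heath-Brown's Lemmas 2--4, plus a partial summation to strip the polynomial weights); the varying secondary frequencies $\bet_{\bfh}$ are then harmless because the mean values integrate $\bet$ over all of $[0,1)$. Finally, no optimisation over $Q$ occurs: both moment estimates are used with arc parameter comparable to the length of the inner sum, and the two resulting terms are simply added.
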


The conclusion of \cite[Theorem 1]{HB1988} delivers a bound analogous to that of Theorem 
\ref{theorem1.3} of the shape
\begin{equation}\label{1.5}
f_k(\alp;X)\ll X^{1+\eps}(X\Tet)^{\frac{4}{3}2^{-k}}.
\end{equation}
We note that Boklan \cite{Bok1994} has applied Hooley $\Del$-functions to replace the factor $X^\eps$ 
here by a power of $\log X$. A comparison between these estimates is perhaps not so transparent. Suppose 
then that $\tet$ is a real number with $0\le \tet\le k/2$, and that $a\in \dbZ$ and $q\in \dbN$ satisfy 
$(a,q)=1$ and $q+X^k|q\alp-a|\asymp X^\tet$. It is a consequence of Dirichlet's theorem on Diophantine 
approximation that, given $\alp \in \dbR$, one can choose $a$ and $q$ in such a manner for some 
$\tet\le k/2$. One finds that the conclusion of Theorem \ref{theorem1.3} has strength equal to that of 
Heath-Brown's bound for $3\le \tet\le k/2$. When $2<\tet<3$, meanwhile, Theorem \ref{theorem1.3} 
delivers the bound $f_k(\alp;X)\ll X^{1+\eps-\frac{2}{3}(1+\tet)2^{-k}}$, which is superior both to the 
bound $f_k(\alp;X)\ll X^{1+\eps-\frac{4}{3}(\tet-1)2^{-k}}$ stemming from Heath-Brown's bound 
(\ref{1.5}), and also to the classical version of Weyl's inequality, which yields 
$f_k(\alp;X)\ll X^{1+\eps-2^{1-k}}$ (see \cite[Lemma 2.4]{Vau1997}). Both Theorem \ref{theorem1.3} 
and (\ref{1.5}) are weaker than the classical version of Weyl's inequality for $0<\tet<2$, though Theorem 
\ref{theorem1.3} remains non-trivial throughout this range.\par

By a standard transference principle (see Exercise 2 of \cite[\S2.8]{Vau1997}), the conclusion of Theorem 
\ref{theorem1.3} may be extended to a superficially more general conclusion which improves the first 
assertion of \cite[Theorem 1]{HB1988} for ranges of parameters analogous to those discussed above.

\begin{corollary}\label{corollary1.4}
Let $k\ge 6$, and suppose that $\alp\in \dbR$, $a\in \dbZ$ and $q\in \dbN$ satisfy $(a,q)=1$. Then one 
has
$$f_k(\alp;X)\ll X^{1+\eps}\Phi^{2^{-k}}+X^{1+\eps}(\Phi/X)^{\frac{2}{3}2^{-k}},$$
where
$$\Phi=(q+X^k|q\alp-a|)^{-1}+X^{-3}+(q+X^k|q\alp-a|)X^{-k}.$$
\end{corollary}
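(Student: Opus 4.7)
The plan is to deduce Corollary \ref{corollary1.4} from Theorem \ref{theorem1.3} via the standard transference principle indicated in Exercise 2 of \cite[\S2.8]{Vau1997}. Theorem \ref{theorem1.3} imposes the Diophantine hypothesis $|\alp-a/q|\le q^{-2}$; the corollary dispenses with this at the cost of enlarging the quantity $\Tet=q^{-1}+X^{-3}+qX^{-k}$ to $\Phi$. One may assume at the outset that $\Phi\le 1$, for otherwise the asserted bound is already implied by the trivial estimate $f_k(\alp;X)\le X$.

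The next step is to apply Dirichlet's theorem on Diophantine approximation with parameter $N=X^{k-1}$, which produces $b\in\dbZ$ and $r\in\dbN$ with $(b,r)=1$, $1\le r\le X^{k-1}$ and $|r\alp-b|\le X^{1-k}$. Since $|\alp-b/r|\le(rX^{k-1})^{-1}\le r^{-2}$, Theorem \ref{theorem1.3} applies to the pair $(b,r)$, yielding
$$f_k(\alp;X)\ll X^{1+\eps}{\Tet'}^{2^{-k}}+X^{1+\eps}(\Tet'/X)^{\frac{2}{3}2^{-k}},$$
where $\Tet'=r^{-1}+X^{-3}+rX^{-k}$. Since $t\mapsto t^{2^{-k}}$ is monotone increasing, Corollary \ref{corollary1.4} follows as soon as one establishes the comparison $\Tet'\ll\Phi$.

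The main obstacle, as I anticipate it, is precisely this comparison, and I would carry it out by splitting according to whether $b/r=a/q$ as rationals. In the equality case, $a=b$ and $q=r$ (both being in lowest terms), and writing $A=q+X^k|q\alp-a|$ one has $q\le A\le q+X$; the inequality $\Tet'\ll\Phi$ then follows by examining the position of $A$ relative to $X^{k/2}$, the minimizer of the U-shaped map $t\mapsto t^{-1}+tX^{-k}$. In the opposite case, $|a/q-b/r|\ge(qr)^{-1}$, and the triangle inequality yields
$$1\le r|q\alp-a|+q|r\alp-b|,$$
which, combined with the Dirichlet bound $|r\alp-b|\le X^{1-k}$, forces a useful lower bound on $|q\alp-a|$, and hence on $A$. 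A further case analysis keyed to the relative sizes of $q$, $r$, $A$ and $X^{k/2}$ should then close the argument. The delicate point is that $r$ and $A$ need not be individually comparable; one must instead exploit the near-symmetry of $t^{-1}+tX^{-k}$ under the involution $t\leftrightarrow X^k/t$, together with the $X^{-3}$ floor that is common to both $\Tet'$ and $\Phi$.
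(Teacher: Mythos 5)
The paper offers no written proof of Corollary \ref{corollary1.4} beyond invoking the transference principle, and your overall strategy---manufacture a rational approximation to which Theorem \ref{theorem1.3} applies and then show that the resulting $\Tet'$ is $\ll\Phi$---is indeed the intended one. The gap is in the execution: applying Dirichlet's theorem at the \emph{fixed} scale $X^{k-1}$ cannot work, and the failure is not confined to the ``further case analysis'' you defer. Already your equality case breaks: take $\alp=X^{1-k}$ (with $X^{k-1}\in\dbN$) and the given pair $(a,q)=(0,1)$; then $A:=q+X^k|q\alp-a|=1+X$, so $\Phi\asymp X^{-1}$, while Dirichlet at scale $X^{k-1}$ may legitimately return $(b,r)=(0,1)$, giving $\Tet'\asymp 1\not\ll X^{-1}\asymp\Phi$. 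No examination of the position of $A$ relative to $X^{k/2}$ rescues this, since $q^{-1}=1$ whereas $A^{-1}\asymp X^{-1}$ and $AX^{-k}\asymp X^{1-k}$; the corollary is nevertheless true here, but only via the approximation $1/X^{k-1}$. Worse, the approximation one actually needs may lie entirely outside the range $r\le X^{k-1}$: with $X=Y^2$, $\alp=(Y-1)/Y^{2k-1}$ and $(a,q)=(1,Y^{2k-2})$ one computes $|q\alp-a|=1/Y$, hence $A\asymp X^{k-1/2}$ and $\Phi\asymp X^{-1/2}$, yet the \emph{only} pair with $r\le X^{k-1}$ and $|r\alp-b|\le X^{1-k}$ is $(b,r)=(0,1)$, for which $\Tet'\asymp 1$; the pair that delivers the corollary is $(Y-1,Y^{2k-1})$, of denominator $X^{k-1/2}>X^{k-1}$. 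Relatedly, your claim that in the case $b/r\ne a/q$ the inequality $1\le r|q\alp-a|+q|r\alp-b|$ ``forces a useful lower bound on $|q\alp-a|$'' fails whenever $q\gg X^{k-1}$, since then $q|r\alp-b|\le qX^{1-k}$ can absorb the entire right-hand side (in the second example above $|q\alp-a|$ is bounded below, but by $X^{-1/2}$, which your scale of approximation cannot exploit).

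The repair is to choose the Dirichlet parameter in terms of the given pair. Put $A=q+X^k|q\alp-a|$ and $C=\min(A,\,X^3,\,X^k/A)$, so that $\Phi\asymp C^{-1}$, and note that $\Tet'=r^{-1}+X^{-3}+rX^{-k}\ll C^{-1}$ precisely when $C\ll r\ll X^k/C$. Apply Dirichlet with parameter $N=4X^k/C$ (which may be as large as $X^k/A$ or $A$, far beyond $X^{k-1}$), obtaining coprime $(b,r)$ with $r\le N$ and $|r\alp-b|\le N^{-1}$; then $|\alp-b/r|\le r^{-2}$ and $rX^{-k}\le 4C^{-1}$ automatically. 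For the lower bound on $r$: if $b/r\ne a/q$, then $1\le r|q\alp-a|+q|r\alp-b|$ and $q|r\alp-b|\le AN^{-1}=AC/(4X^k)\le\tfrac14$, whence $r\ge\tfrac34|q\alp-a|^{-1}\ge\tfrac34 X^k/A\ge\tfrac34 C$; if instead $b/r=a/q$, then $X^k|q\alp-a|\le X^kN^{-1}=C/4\le A/4$, so $r=q=A-X^k|q\alp-a|\ge\tfrac34A\ge\tfrac34 C$. In either case $\Tet'\ll\Phi$, and since both terms on the right of Theorem \ref{theorem1.3} are increasing in $\Tet$, the corollary follows. Your preliminary reduction to $\Phi\le 1$ and your appeal to monotonicity are sound; the essential missing idea is that the scale of the auxiliary approximation must depend on $(a,q)$, reflecting the involution $t\leftrightarrow X^k/t$ you correctly identified but did not put to use.
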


We finish by directing the reader to a couple of immediate applications of Theorems \ref{theorem1.1} and 
\ref{theorem1.2}, the proofs of which, amounting to routine applications of the circle method, we omit. 
First we consider the solubility of diagonal cubic equations constrained by a linear slice. When $s\in \dbN$, 
consider fixed integers $a_j,b_j$ $(1\le j\le s)$. Define $N(B)=N(B;\bfa,\bfb)$ to be the number of integral 
solutions of the simultaneous equations
\begin{equation}\label{1.6}
\sum_{j=1}^sa_jx_j^3=\sum_{j=1}^sb_jx_j=0,
\end{equation}
with $|x_j|\le B$ $(1\le j\le s)$. Then by incorporating the $10^{\rm{th}}$-moment estimate of 
Theorem \ref{theorem1.2} into the methods described in Br\"udern and Robert \cite[\S8]{BR2014}, one 
obtains the following conclusion.

\begin{theorem}\label{theorem1.5} Let $s\ge 10$ and suppose that $a_j\ne 0$ $(1\le j\le s)$. Suppose in 
addition that the pair of equations (\ref{1.6}) has non-singular solutions both in $\dbR$ and in $\dbQ_p$ 
for each prime number $p$. Then there are positive numbers $\calC(\bfa,\bfb)$ and $\del$ for which
$$N(B;\bfa,\bfb)=\calC(\bfa,\bfb)B^{s-4}+O(B^{s-4-\del}).$$
\end{theorem}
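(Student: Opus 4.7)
The plan is to execute the two-dimensional Hardy-Littlewood method. By orthogonality,
$$N(B;\bfa,\bfb)=\int_0^1\int_0^1\prod_{j=1}^s\Bigl(\sum_{|x|\le B}e(a_j\alp x^3+b_j\bet x)\Bigr)\d\alp\d\bet,$$
and the bilateral sum in each factor differs from $g(a_j\alp,b_j\bet;B)$ only trivially (via pairing $x\leftrightarrow -x$). Fix a small positive parameter $\eta$, set $Q=B^\eta$, and dissect the $\alp$-interval into the major arcs $\grM(Q)$ and minor arcs $\grm(Q)$ as in the excerpt, writing $N(B)=N_{\grM}(B)+N_{\grm}(B)$.

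On the minor arcs, pick ten indices $j_1,\ldots,j_{10}$ and apply H\"older's inequality with exponents $10,\ldots,10,\infty,\ldots,\infty$, bounding the remaining $s-10$ factors trivially by $B$. For each chosen index, since $a_j\ne 0$ by hypothesis, the inclusion $a_j\grm(Q)\subseteq \grm(Q/|a_j|)$ (mod $1$), combined with the push-forward of Lebesgue measure under the degree-$|a_j|$ covering map $\alp\mapsto a_j\alp$ on the torus, transforms the $L^{10}$ integral into a bounded multiple of $I_{10}^*(B;Q')$ for some $Q'\asymp Q$, after handling the $\bet$-substitution analogously (choosing indices so that $b_j\ne 0$ where available, else absorbing the $\bet$-integration via Parseval). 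Theorem \ref{theorem1.2} then yields
$$N_{\grm}(B)\ll B^{s-10}\bigl(B^{6+\eps}Q^{-1/3}\bigr)=B^{s-4+\eps-\eta/3},$$
which is $O(B^{s-4-\del})$ on taking $\eps$ and $\del$ with $\eps+\del<\eta/3$.

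On the major arcs, apply the standard Weyl-type approximation $g(a_j\alp,b_j\bet;B)\approx q^{-1}S(q,a_ja,b_jb)\,v(\alp-a/q,\bet)$ for $\alp\in\grM(q,a)$, with $S$ a complete exponential sum modulo $q$ and $v$ the usual smooth substitute, then follow the pruning procedure of Br\"udern-Robert \cite[\S8]{BR2014}: descend in stages from $\grM(Q)$ to very narrow central arcs via intermediate mean-value estimates, and complete the local factors to arrive at the singular series $\grS(\bfa,\bfb)$ and singular integral $\grJ(\bfa,\bfb)$. The non-singular real and $p$-adic solubility hypotheses guarantee that both are strictly positive, yielding $\calC(\bfa,\bfb)=\grS(\bfa,\bfb)\grJ(\bfa,\bfb)>0$. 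The principal obstacle lies in this pruning step connecting $\grM(Q)$ with the narrow central arcs, since $\eta$ must be chosen to balance the minor-arc power saving against the intermediate major-arc errors. Classically these errors are controlled through $L^{10}$ mean values of $g(\alp,\bet)$ over minor-type subsets of $\grM(Q)$, and it is precisely here that Theorem \ref{theorem1.2} is decisive, upgrading the merely logarithmic savings afforded by Br\"udern-Robert's earlier tenth-moment bound into a genuine power saving $Q^{-1/3}$. Optimising $\eta$ against the resulting error terms then delivers the asserted exponent $\del>0$.
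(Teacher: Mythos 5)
Your proposal follows essentially the route the paper intends: the paper itself offers no proof of Theorem \ref{theorem1.5}, stating only that the first estimate of Theorem \ref{theorem1.2} is to be substituted for \cite[Theorem 2]{BR2014} in the circle-method argument of \cite[\S 8]{BR2014}, and your sketch (H\"older on ten factors over the minor arcs to import $I_{10}^*(B;Q)\ll B^{6+\eps}Q^{-1/3}$, plus Br\"udern--Robert's major-arc and pruning machinery) is exactly that strategy. The only point to watch is the case of vanishing linear coefficients $b_j$, which you gloss over with a remark about Parseval; this is handled within the framework of \cite{BR2014} and does not affect the correctness of your outline.
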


Br\"udern and Robert \cite[Theorem 1]{BR2014} establish precisely this conclusion as the cubic case of a 
more general result, though with the error term $O(B^{s-4-\del})$ replaced by 
$O(B^{s-4}(\log B)^{-2})$.  We offer no details of the proof of Theorem \ref{theorem1.5}, since the first 
estimate of Theorem 1.2 may be substituted for \cite[Theorem 2]{BR2014} in the argument of 
\cite[\S8]{BR2014}, without complication\footnote{The author is very grateful to J\"org Br\"udern and 
Olivier Robert for supplying an advance copy of their joint paper \cite{BR2014}, reference to which provides 
an excellent framework for the proof of this result.}.\par

Next, consider a fixed natural number $k$, and fixed coefficients $a_0,\ldots ,a_s\in \dbZ\setminus \{0\}$ 
and $b_1,\ldots ,b_s\in \dbZ$. By more fully exploiting the potential of the $9^{\rm{th}}$ moment 
estimate of Theorem \ref{theorem1.1}, it would be possible to apply the circle method to the problem of 
representing large positive integers $n$ in the shape
\begin{equation}\label{1.7}
F(x_1,\ldots ,x_s)+w^k=n,
\end{equation}
for the class of non-degenerate cubic forms $F$ of the shape
$$F(\bfx)=a_0(b_1x_1+\ldots +b_sx_s)^3+a_1x_1^3+\ldots +a_sx_s^3.$$
Thus, provided only that $s\ge 8$, for any $k\ge 1$, one can show that all sufficiently large natural 
numbers $n$ subject to the necessary congruence conditions are represented in the form (\ref{1.7}).\par

The strategy for proving this assertion is to replace (\ref{1.7}) by the equivalent system of equations
$$\left. \begin{aligned}
a_0x_0^3+a_1x_1^3+\ldots +a_sx_s^3&=n-w^k\\
x_0-b_1x_1-\ldots -b_sx_s&=0
\end{aligned} \right\}.$$
The analysis of this system is achieved by H\"olderising the associated exponential sums in order to utilise 
the mean value estimate
$$\int_0^1\int_0^1 |g(\alp,\bet)|^{s+1}\d\alp\d\bet \ll X^{s-3+\eps},$$
valid for $s\ge 8$, together with a pedestrian application of Weyl's inequality for the exponential sum over 
the $k$th power $w^k$. A routine treatment of the major arc contribution completes the analysis.\par

Throughout this paper, whenever $\eps$ appears in a statement, we assert that the statement holds for each 
$\eps>0$. Implicit constants in Vinogradov's notation $\ll$ and $\gg$ may depend on $\eps$, and other 
ambient exponents such as $k$, but not on the main parameter $X$. Finally, we write $\|\tet\|$ for 
$\underset{m\in \dbZ}{\min}|\tet -m|$.\par

\section{The basic mean value estimate} Our starting point for the proof of Theorems \ref{theorem1.1} 
and \ref{theorem1.2} is the mean value estimate supplied by the cubic case of the main conjecture in 
Vinogradov's mean value theorem, established in our very recent work \cite[Theorem 1.1]{Woo2014b}. 
When $k$ and $s$ are natural numbers, and $X$ is a large real number, denote by $J_{s,k}(X)$ the 
number of integral solutions of the system
$$x_1^j+\ldots +x_s^j=y_1^j+\ldots +y_s^j\quad (1\le j\le k),$$
with $1\le x_i,y_i\le X$ $(1\le i\le s)$. Then \cite[Theorem 1.1]{Woo2014b} shows that
\begin{equation}\label{2.1}
J_{s,3}(X)\ll X^\eps (X^s+X^{2s-6}).
\end{equation}
We transform this estimate into a bound for the $12$-th moment of $g(\alp,\bet)$ restricted to the set of 
minor arcs $\grm(Q)$ defined in the preamble to the statement of Theorem \ref{theorem1.2}. In this 
section we prove a number of mean value estimates for the exponential sum $g(\alp,\bet)$ defined in 
(\ref{1.3}), beginning with a mean value of the type (\ref{1.4}).

\begin{theorem}\label{theorem2.1}
Suppose that $Q$ is a positive number with $Q\asymp X$. Then for each $\eps>0$, one has 
$I_{12}^*(X;Q)\ll X^{7+\eps}$.
\end{theorem}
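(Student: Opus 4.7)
The plan is to leverage the cubic case of Vinogradov's mean value theorem, $J_{6,3}(X)\ll X^{6+\eps}$, by transferring the twelfth moment of $g(\alp,\bet)$ from the ``slice'' with vanishing quadratic coefficient into the full three-variable Vinogradov mean value. Let $g_3(\gam,\del,\alp;X)=\sum_{1\le x\le X}e(\alp x^3+\del x^2+\gam x)$, so that $g(\alp,\bet;X)=g_3(\bet,0,\alp;X)$. The crucial device is the translation identity: for each integer $t$ with $1\le t\le H$, the substitution $x=y+t$ yields
$$g(\alp,\bet;X)=e(\alp t^3+\bet t)\,g_3(3\alp t^2+\bet,\,3\alp t,\,\alp;\,X)+O(t),$$
the $O(t)$ arising from the shift in the range of summation. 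Summing over $t=1,\ldots,H$ and applying H\"older's inequality to the twelfth power delivers the pointwise estimate
$$|g(\alp,\bet)|^{12}\ll H^{-1}\sum_{t=1}^{H}|g_3(3\alp t^2+\bet,\,3\alp t,\,\alp;\,X)|^{12}+H^{12}.$$

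Integrating this bound over $(\alp,\bet)\in\grm(Q)\times[0,1)$ and exploiting invariance of the inner $\bet$-integral under translation by $-3\alp t^2\pmod 1$, one is led to estimate
$$I_{12}^*(X;Q)\ll H^{-1}\sum_{t=1}^{H}\int_{0}^{1}\int_{\grm(Q)}|g_3(\bet,\,3\alp t,\,\alp;\,X)|^{12}\,\d\alp\,\d\bet+H^{12}.$$
The central claim is that, for $\alp\in\grm(Q)$ with $Q\asymp X$, the discrete average in $t$ over the shifted quadratic coefficients $3\alp t\pmod 1$ effectively mimics the continuous mean value $\int_{0}^{1}|g_3(\bet,\gam,\alp;X)|^{12}\,\d\gam$. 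Proceeding by Fourier analysis in $\gam$, the constant mode contributes
$$H\int_{\grm(Q)}\int_0^1\int_0^1|g_3(\bet,\gam,\alp;X)|^{12}\,\d\gam\,\d\bet\,\d\alp\le H\cdot J_{6,3}(X)\ll HX^{6+\eps},$$
which yields the desired $X^{6+\eps}$ upon dividing by $H$; here the positivity of the function $\int_0^1|g_3|^{12}\,\d\bet$ in its second variable is essential to collapsing the triple integral to $J_{6,3}(X)$.

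The main obstacle lies in controlling the non-constant Fourier modes, which produce sums of the shape $\sum_{m\ne 0}c_m(\alp)\sum_{t\le H}e(3\alp tm)$, with $c_m(\alp)$ the Fourier coefficients of $\int_0^1|g_3|^{12}\,\d\bet$ in the quadratic variable. These are handled using Cauchy--Schwarz together with Parseval's identity applied to the $c_m$, combined with the standard Diophantine estimate $\sum_{t\le H}\min(H,\|tm\alp\|^{-1})\ll(M/q+1)(H+q\log q)$ for $\alp$ admitting approximation by $a/q$; on $\grm(Q)$ with $Q\asymp X$ one has $q>Q$, which provides the necessary power saving. Balancing the auxiliary error $H^{12}$ against the Fourier error dictates the essentially optimal choice $H\asymp X^{7/12}$, leading to the claimed bound $I_{12}^*(X;Q)\ll X^{7+\eps}$. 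The delicate accounting of these competing errors, and in particular a sharp treatment of the Fourier coefficients so that the Parseval--Diophantine contribution does not exceed $HX^{6+\eps}$, constitutes the technical heart of the proof.
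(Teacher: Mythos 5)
Your core idea --- recovering the missing quadratic coefficient by averaging the shift $x\mapsto x+t$ and reducing to the cubic Vinogradov mean value $J_{6,3}(X)\ll X^{6+\eps}$ --- is exactly the idea underlying the paper's proof, which simply quotes from the argument of \cite[Theorem 2.1]{Woo2012b} the estimate $\int_{\grn_3(Q)}\int_0^1|F(\bet,\tet)|^{12}\d\bet\d\tet\ll X(\log X)^{13}J_{6,3}(2X)$ and specialises. Your execution, however, has gaps that are fatal as written. The central one is the bookkeeping of the ``constant mode''. Writing $F_\alp(\gam)=\int_0^1|g_3(\bet,\gam,\alp;X)|^{12}\d\bet$, the natural resolution of $F_\alp$ in the quadratic variable is $X^{-2}$: a single sample point $3\alp t$ falling into a region where $F_\alp$ concentrates (e.g.\ near a rational with small denominator, where $F_\alp\asymp X^{11}$ on a set of measure $\asymp X^{-2}$) already contributes $\asymp X^2\int_0^1F_\alp(\gam)\d\gam$. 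Thus the best possible outcome of the discrete average is $H^{-1}\sum_{t\le H}F_\alp(3\alp t)\ll X^{2+\eps}H^{-1}\int_0^1F_\alp(\gam)\d\gam$, and the factor $X^2H^{-1}$ (equal to $X$ when $H=X$) is precisely the source of the exponent $7=1+6$ in the theorem. Your framing, in which the constant mode yields $X^{6+\eps}$ and the non-zero modes are errors to be beaten below it, is therefore structurally wrong: the non-zero modes are genuinely of size $X$ times the constant mode. Moreover, the proposed Cauchy--Schwarz/Parseval treatment cannot recover this. Parseval gives $\sum_m|c_m(\alp)|^2=\int_0^1F_\alp(\gam)^2\d\gam\le(\sup_\gam F_\alp)\int_0^1F_\alp\ll X^{11}\cdot X^9$, so the first Cauchy--Schwarz factor alone is of size $X^{10}$, and since $\min(H,\|3m\alp\|^{-1})\ge 1$ the second factor is at least $X$; no choice of $H\le X$ brings this near $X^{7}$. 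The correct mechanism is not Fourier-analytic but a majorant-plus-spacing argument: $F_\alp(3\alp t)\ll X^2\int_{\|\gam-3\alp t\|\le X^{-2}}F_\alp(\gam)\d\gam$, together with the estimate that for $\alp\in\grm(Q)$, $Q\asymp X$, at most $O(X^\eps)$ values of $t\le X$ place $3\alp t$ in any prescribed interval of length $X^{-2}$ (this is where the minor arc hypothesis enters).

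There is a second incompatibility: your $O(t)$ truncation error forces $H^{12}\ll X^7$, i.e.\ $H\ll X^{7/12}$, while the loss $X^{2}H^{-1}$ identified above then yields at best $X^{8+\eps}H^{-1}\gg X^{7+5/12}$. One must take $H\asymp X$, and to do so one cannot discard the shifted range as an additive error. Instead one retains the incomplete sum over $y\in(1-t,X-t]$ and observes that the resulting counting problem is still majorised via translation invariance by $J_{6,3}(2X)$ --- which is exactly why the paper's bound (2.2) features $J_{s,k}(2X)$ rather than $J_{s,k}(X)$. With these repairs your argument becomes, in essence, the proof of \cite[Theorem 2.1]{Woo2012b} that the paper cites.
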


\begin{proof} When $k\in \dbN$, write
$$f(\bfalp)=\sum_{1\le x\le X}e(\alp_1x+\ldots +\alp_kx^k)$$
and
$$F(\bfbet ,\tet)=\sum_{1\le x\le X}e(\bet_1 x+\ldots +\bet_{k-2}x^{k-2}+\tet x^k).$$
Then it follows from orthogonality that
$$J_{s,k}(X)=\int_{[0,1)^k}|f(\bfalp)|^{2s}\d\bfalp .$$
In addition, write $\grn_k(Q)$ for the set of real numbers $\alp\in [0,1)$ having the property that, 
whenever $q\in \dbN$ and $\| q\alp\|\le QX^{-k}$, then $q>Q$. Then the argument of the proof of 
\cite[Theorem 2.1]{Woo2012b} leading to the penultimate display of that proof yields the estimate
\begin{equation}\label{2.2}
\int_{\grn_k(Q)}\int_{[0,1)^{k-2}}|F(\bfbet,\tet)|^{2s}\d\bfbet \d\tet \ll 
X^{k-2}(\log X)^{2s+1}J_{s,k}(2X).
\end{equation}
By specialising to the case $k=3$ and $s=6$, we therefore deduce from (\ref{2.1}) that
$$\int_0^1\int_{\grm (Q)}|g(\alp,\bet)|^{12}\d\alp \d\bet \ll X(\log X)^{13}J_{6,3}(2X)\ll X^{7+\eps}.
$$
This completes the proof of the lemma.
\end{proof}

We remark that a more careful analysis of the proof of \cite[Theorem 2.1]{Woo2012b} would reveal that, 
without restriction on $Q$, one may replace the estimate (\ref{2.2}) by the bound
$$\int_{\grn_k(Q)}\int_{[0,1)^{k-2}}|F(\bfbet,\tet)|^{2s}\d\bfbet \d\tet \ll 
X^{k-1+\eps}(Q^{-1}+X^{-1}+QX^{-k})J_{s,k}(2X).$$
Such an estimate would suffice to establish the bound $I_{12}^*(X;Q)\ll X^{8+\eps}Q^{-1}$. We will 
recover this estimate from Theorem \ref{theorem2.1} and Lemma \ref{lemma2.3} below in a manner that 
will likely prove more transparent for the reader.\par

By way of comparison, it follows from \cite[Theorem 6]{Hua1947} that $I_{10}(X)\ll X^{6+\eps}$. 
Applying this estimate in combination with Weyl's inequality (see \cite[Lemma 2.4]{Vau1997}) when 
$Q\asymp X$, one would obtain the upper bound
$$I_{12}^*(X;Q)\ll \Bigl( \sup_{\alp\in \grm(Q)}|g(\alp,\bet)|\Bigr)^2I_{10}(X)\ll X^{15/2+\eps},$$
in place of the conclusion of Theorem \ref{theorem2.1}. The superiority of our new estimate is clear.\par

We next establish some auxiliary major arc estimates. It is useful in this context to introduce some additional 
notation. We define the function $\Psi(\alp)$ for $\alp\in [0,1)$ by putting
$$\Psi(\alp)=(q+X^3|q\alp -a|)^{-1},$$
when $\alp\in \grM(q,a)\subseteq \grM(\tfrac{1}{2}X^{3/2})$, and otherwise by taking $\Psi(\alp)=0$.

\begin{lemma}\label{lemma2.2} Let $Q$ be a positive number with $Q\asymp X$, and suppose that 
$\alp \in \grM(Q)$. Then for each $\eps>0$, one has
$$\int_0^1|g(\alp,\bet)|^4\d\bet \ll X^{3+\eps}\Psi(\alp).$$
\end{lemma}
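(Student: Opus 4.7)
The plan is to exploit the linear appearance of $\bet$ in $g(\alp,\bet)$ by applying Parseval in $\bet$, reducing the fourth moment to a sum of squared quadratic Weyl sums. Specifically, expanding $|g(\alp,\bet)|^2 = g\bar g$ and substituting $x = y + h$ yields
$$|g(\alp,\bet)|^2 = \sum_{|h|<X} e(\bet h)\,e(\alp h^3)\,T(h;\alp),$$
where $T(h;\alp) = \sum_{y\in I_h} e(3\alp h y^2 + 3\alp h^2 y)$ and $I_h \subseteq [1,X]$ is an interval of length at most $X$. Since $|g|^2$ is real-valued, Parseval then gives
$$\int_0^1 |g(\alp,\bet)|^4\,\d\bet = \sum_{|h|<X} |T(h;\alp)|^2.$$
The diagonal term $h=0$ contributes only $X^2$, which is absorbed by the target bound since $\Psi(\alp)\gg 1/X$ throughout $\grM(Q)$ when $Q\asymp X$.

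Next, I would apply Weyl differencing to each $T(h;\alp)$ for $h\ne 0$, obtaining the standard quadratic bound
$$|T(h;\alp)|^2 \ll \sum_{|k|<X} \min\bigl(X,\|6\alp hk\|^{-1}\bigr).$$
Summing over $h$ and collecting pairs $(h,k)$ according to their product $m = hk$, the divisor bound $d(m)\ll m^{\eps}$ consolidates matters into a single one-variable sum: it suffices to show
$$\Sigma := \sum_{m=1}^{6X^2} \min\bigl(X,\|6\alp m\|^{-1}\bigr) \ll X^{3+\eps}\Psi(\alp).$$

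To estimate $\Sigma$, write $6\alp = \bar a/\bar q + \eta'$ in lowest terms, so that $\bar q \asymp q$ and $\bar q|\eta'| \asymp |q\alp - a|$. I would decompose $[1,6X^2]$ into blocks of length $\bar q$: within the $j$-th block, the residues $\{\bar a m/\bar q\}$ exhaust $\{0,1/\bar q,\ldots,(\bar q-1)/\bar q\}$ exactly once, perturbed by an essentially constant drift $\phi_j \approx j\bar q\eta'$. A routine arithmetic-progression computation shows the block contribution to be $\min(X,\bar q/\|\bar q\phi_j\|) + O(\bar q\log X)$, and summing over the $O(X^2/\bar q)$ blocks — with a case split according to whether $j$ lies below or beyond $1/(2\bar q^2|\eta'|)$, to handle the wrap-around of $\{j\bar q^2\eta'\}$ — yields $\Sigma \ll X^{3+\eps}/[\bar q(1+X^3|\eta'|)]$. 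Since $\bar q(1+X^3|\eta'|) \asymp q + X^3|q\alp - a|$, this matches the target $X^{3+\eps}\Psi(\alp)$. The principal obstacle is precisely the factor $(1+X^3|\eta'|)^{-1}$: Vaughan's classical Lemma 2.2 alone gives only $\Sigma\ll X^{3+\eps}/\bar q$, which is adequate when $|q\alp-a|$ is very small but loses the full major-arc saving when $|q\alp-a|$ approaches its maximum $QX^{-3}$; recovering that saving is what forces the sharper, $\eta'$-sensitive block analysis sketched above.
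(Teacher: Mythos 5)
Your argument is correct, and although it arrives at the same bilinear object as the paper, both halves of your route differ from the one taken there. For the reduction, the paper computes $\int_0^1|g(\alp,\bet)|^4\d\bet$ by orthogonality and uses the identity $x_1^3+x_2^3-x_3^3-x_4^3=-3(x_1+x_2)(x_1-x_3)(x_2-x_3)$ on the hyperplane $x_1+x_2=x_3+x_4$, changing variables to land directly on $\sum_{u_1,u_2}\min\{X,\|6\alp u_1u_2\|^{-1}\}$; your Parseval-in-$\bet$ computation followed by Weyl differencing of the quadratic sums $T(h;\alp)$ produces the identical double sum and is equally valid. The more substantive divergence is in the second half. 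The paper quotes the standard reciprocal-sum lemma (Vaughan, Lemma 2.2) to get $X^{3+\eps}(q^{-1}+X^{-1}+qX^{-3})$ for the Dirichlet approximation, then invokes the transference principle (Vaughan, \S 2.8, Exercise 2) to rephrase the bound in terms of the major-arc datum $(a,q)$, and finally absorbs the terms $X^{-1}$ and $\lam X^{-3}$ into $\Psi(\alp)$ using $q+X^3|q\alp-a|\ll X$ on $\grM(Q)$ with $Q\asymp X$. Your block decomposition is in effect a self-contained reproof of that transference step in this special case, and the case analysis does close: the within-block drift is $O(\bar{q}|\eta'|)=O(X^{-2})$, well below the residue spacing $1/\bar{q}$, and $j\bar{q}^2\eta'$ traverses only $O(1)$ periods over the $O(X^2/\bar{q})$ blocks, so the sum over $j$ of $\min(X,\bar{q}\|j\bar{q}^2\eta'\|^{-1})$ is $\ll X^{\eps}(X^3/\bar{q})(1+X^3|\eta'|)^{-1}$ in both the non-wrapping and wrapping regimes. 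One point you should make explicit: the secondary terms you generate (the diagonal $X^2$, and the $O(\bar{q}\log X)$ per block summing to $O(X^2\log X)$) are dominated by $X^{3+\eps}\Psi(\alp)$ only because $\bar{q}(1+X^3|\eta'|)\asymp q+X^3|q\alp-a|\ll X$ on $\grM(Q)$; you flag this for the diagonal, but the same caveat governs your displayed claim for $\Sigma$, which is therefore valid only for $\alp\in\grM(Q)$ and not for general $\alp$. With that understood, your proof is complete; the paper's version is shorter only because it quotes two standard lemmas where you rederive the relevant estimates from scratch.
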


\begin{proof}
By orthogonality, one has
\begin{align*}
\int_0^1|g(\alp,\bet)|^4\d\bet &=\sum_{\substack{1\le x_1,x_2,x_3,x_4\le X\\ x_1+x_2=x_3+x_4}}
e((x_1^3+x_2^3-x_3^3-x_4^3)\alp )\\
&=\sum_{\substack{1\le x_1,x_2,x_3\le X\\ 1\le x_1+x_2-x_3\le X}}
e(-3(x_1+x_2)(x_1-x_3)(x_2-x_3)\alp).
\end{align*}
The change of variables
$$u_1=x_2-x_3,\quad u_2=x_1-x_3,\quad u_3=x_1+x_2$$
therefore reveals that
$$\int_0^1|g(\alp,\bet)|^4\d\bet =\sum_{-X<u_1,u_2,u_3\le 2X}e(-3u_1u_2u_3\alp),$$
in which the summation over $\bfu$ is subject to the condition that each of
$$u_2+u_3-u_1,\quad u_3+u_1-u_2,\quad u_3-u_1-u_2\quad \text{and}\quad u_1+u_2+u_3$$
is even, and lies in the interval $[1,2X]$. For a fixed choice of $u_1$ and $u_2$, the sum over $u_3$ 
consequently amounts either to an empty sum, or else to a sum over an arithmetic progression modulo $2$ 
lying in an interval of length at most $3X$. Thus we deduce that
\begin{equation}\label{2.3}
\int_0^1|g(\alp,\bet)|^4\d\bet \ll \sum_{1\le u_1,u_2\le 2X}\min \{ X,\|6\alp u_1u_2\|^{-1}\}.
\end{equation}

\par Suppose that $a\in \dbZ$ and $q\in \dbN$ satisfy the conditions $(a,q)=1$ and 
$|\alp-a/q|\le q^{-2}$. That such a rational approximation exists is a consequence of Dirichlet's theorem. 
Then by making use of a divisor function estimate together with a standard reciprocal sums lemma (see, for 
example \cite[Lemma 2.2]{Vau1997}), one deduces from (\ref{2.3}) that
\begin{align*}
\int_0^1|g(\alp,\bet)|^4&\ll X^\eps \sum_{1\le y\le 24X^2}\min\{X^3/y,\|\alp y\|^{-1}\}\\
&\ll X^{3+\eps}(q^{-1}+X^{-1}+qX^{-3}).
\end{align*}
Hence, by a standard transference principle (see Exercise 2 of \cite[\S2.8]{Vau1997}), one finds that 
whenever $\alp\in [0,1)$, $b\in \dbZ$ and $r\in \dbN$ satisfy $(b,r)=1$, then
\begin{equation}\label{2.4}
\int_0^1|g(\alp,\bet)|^4\d\bet \ll X^{3+\eps}(\lam^{-1}+X^{-1}+\lam X^{-3}),
\end{equation}
where $\lam=r+X^3|r\alp-b|$.\par

Suppose now that $\alp \in \grM(q,a)\subseteq \grM$. Then we have
$$q+X^3|q\alp -a|\ll X,$$
and thus it follows from (\ref{2.4}) that
$$\int_0^1|g(\alp,\bet)|^4\d\bet \ll X^{3+\eps}(X^{-1}+\Psi(\alp))\ll X^{3+\eps}\Psi(\alp).$$
This completes the proof of the lemma. 
\end{proof}

\begin{lemma}\label{lemma2.3} Suppose that $Q$ is a positive number with $Q\asymp X$. Then for each $\eps>0$, one has
$$\int_0^1\int_{\grM(Q)}|g(\alp,\bet)|^8\d\alp\d\bet \ll X^{4+\eps}.$$
\end{lemma}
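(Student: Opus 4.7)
The plan is to prove the pointwise bound
\begin{equation*}
\int_0^1 |g(\alp,\bet)|^8\d\bet \ll X^{7+\eps}\Psi(\alp)^2 \qquad (\alp\in\grM(Q)),
\end{equation*}
and then integrate $\Psi(\alp)^2$ over the major arcs. The naive estimate $\int_0^1|g|^8\d\bet \le X^4\int_0^1|g|^4\d\bet$, combined with Lemma \ref{lemma2.2} and the straightforward bound $\int_{\grM(Q)}\Psi(\alp)\d\alp \ll X^{-2}\log X$, would deliver only $X^{5+\eps}$, so a sharper pointwise input is required.

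To obtain the sharper bound, I would expand $|g(\alp,\bet)|^2$ as a finite Fourier series in $\bet$:
\begin{equation*}
|g(\alp,\bet)|^2 = \sum_{|h|<X} b_h(\alp) e(h\bet), \qquad b_h(\alp) = \sum_{\substack{1\le x,y\le X\\ x-y=h}} e(\alp(x^3-y^3)).
\end{equation*}
Parseval's identity in $\bet$ gives $\int_0^1|g|^4\d\bet = \|b(\alp)\|_2^2$, which by Lemma \ref{lemma2.2} is $\ll X^{3+\eps}\Psi(\alp)$ on $\grM(Q)$. Squaring the Fourier series and applying Parseval a second time yields $\int_0^1|g(\alp,\bet)|^8\d\bet = \|b(\alp)*b(\alp)\|_2^2$, where the convolution is over $\dbZ$. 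I would then invoke Young's convolution inequality $\|b*b\|_2 \le \|b\|_1\|b\|_2$ in tandem with the Cauchy--Schwarz bound $\|b\|_1 \le (2X-1)^{1/2}\|b\|_2$ (valid since $b_h$ is supported on $|h|<X$) to deduce
\begin{equation*}
\int_0^1|g(\alp,\bet)|^8\d\bet \le (2X-1)\|b(\alp)\|_2^4 \ll X^{7+\eps}\Psi(\alp)^2.
\end{equation*}

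It remains to integrate in $\alp$. A direct calculation shows that $\int_{\grM(Q)}\Psi(\alp)^2\d\alp \ll X^{-3}\log X$: on $\grM(q,a)$ the substitution $t = X^3q(\alp-a/q)$ converts the contribution to an integral of $(q+|t|)^{-2}$ over $|t|\le Q$, of size $O(X^{-3}q^{-2})$, and summation over $a$ coprime to $q$ and $q\le Q\asymp X$ produces the logarithmic factor. Combining the two bounds yields the required $X^{4+\eps}$. The central technical step is the Young--Cauchy--Schwarz combination, which captures the $\ell^1$-versus-$\ell^2$ discrepancy in $b(\alp)$ that the crude $L^\infty$-bound $|g|^2\le X^2$ fails to see; the remainder of the argument is essentially routine given Lemma \ref{lemma2.2}.
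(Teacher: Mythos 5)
Your proof is correct, and the endgame (integrating $\Psi(\alp)^2$ over $\grM(Q)$ to get $X^{-3}\log X$) is identical to the paper's. The middle step, however, is genuinely different. The paper obtains the intermediate bound $\int_0^1|g(\alp,\bet)|^8\d\bet\ll X^{7+\eps}\Psi(\alp)^2$ by first proving the pointwise estimate $|g(\alp,\bet)|^4\ll X^{4+\eps}\Psi(\alp)$ on $\grM(Q)$ via Weyl's inequality plus the same transference principle used in Lemma \ref{lemma2.2}, and then multiplying this sup-bound into the fourth moment of Lemma \ref{lemma2.2}. You instead bootstrap Lemma \ref{lemma2.2} alone by harmonic analysis in $\bet$: Parseval twice, Young's inequality, and Cauchy--Schwarz on the Fourier coefficients $b_h(\alp)$ supported on $|h|<X$. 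Note that your combination $\|b*b\|_2\le\|b\|_1\|b\|_2\le(2X-1)^{1/2}\|b\|_2^2$ is tantamount to the pointwise inequality $\sup_\bet|g(\alp,\bet)|^4\le(2X-1)\int_0^1|g(\alp,\bet')|^4\d\bet'$, so in effect you re-derive the paper's estimate (2.5) from Lemma \ref{lemma2.2} rather than from Weyl's inequality; this makes your argument more self-contained and would extend to settings where a Weyl-type sup-bound of matching strength is unavailable. The one thing the paper's route buys that yours does not is the explicit estimate (2.5) itself, which is reused later in the proof of Theorem \ref{theorem1.2} to bound $\sup_{\alp\in\grK(Q)}|g(\alp,\bet)|$ — though your Cauchy--Schwarz identity would in fact furnish that bound too.
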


\begin{proof} Suppose that $(\alp,\bet)\in [0,1)^2$, and that $a\in \dbZ$ and $q\in \dbN$ satisfy 
$(a,q)=1$ and $|\alp-a/q|\le q^{-2}$. Then it follows from Weyl's inequality (see 
\cite[Lemma 2.4]{Vau1997}) that
$$|g(\alp,\bet)|\ll X^{1+\eps}(q^{-1}+X^{-1}+qX^{-3})^{1/4}.$$
By applying the same transference principle that delivered (\ref{2.4}), we therefore deduce that when 
$\alp\in \grM(q,a)\subseteq \grM(Q)$, one has
\begin{equation}\label{2.5}
|g(\alp,\bet)|^4\ll X^{4+\eps}\Psi(\alp).
\end{equation}
By combining this estimate with the conclusion of Lemma \ref{lemma2.2}, therefore, we find that
$$\int_0^1|g(\alp,\bet)|^8\d\bet \ll X^{4+\eps}\Psi(\alp)\int_0^1|g(\alp,\bet)|^4\d\bet 
\ll X^{7+2\eps}\Psi(\alp)^2.$$
Consequently, we obtain the estimate
\begin{align*}
\int_0^1\int_{\grM(Q)}|g(\alp,\bet)|^8\d\alp\d\bet &\ll X^{7+\eps}
\sum_{1\le q\le Q}\sum_{a=1}^qq^{-2}\int_{-1/2}^{1/2}(1+X^3|\gam|)^{-2}\d\gam \\
&\ll X^{4+\eps}\sum_{1\le q\le Q}q^{-1}\ll X^{4+2\eps}.
\end{align*}
This completes the proof of the lemma.
\end{proof}

By utilising the conclusions of Lemma \ref{lemma2.3} and Theorem \ref{theorem2.1}, we obtain the mean 
value estimates recorded in Theorem \ref{theorem1.1}.

\begin{proof}[The proof of Theorem \ref{theorem1.1}] The estimate $I_6(X)\ll X^{3+\eps}$ is essentially 
classical (see \cite[Lemma 5.2]{Hua1947}). By combining this estimate with Theorem \ref{theorem2.1} via 
Schwarz's inequality, one finds that
\begin{equation}\label{2.6}
I_9^*(X;X)\le (I_{12}^*(X;X))^{1/2}(I_6(X))^{1/2}\ll (X^{7+\eps})^{1/2}
(X^{3+\eps})^{1/2}=X^{5+\eps}.
\end{equation}
Meanwhile, the trivial estimate $|g(\alp,\bet)|\le X$ combines with Lemma \ref{lemma2.3} to deliver the 
bound
\begin{equation}\label{2.7}
\int_0^1\int_{\grM(X)}|g(\alp,\bet)|^9\d\alp\d\bet \le X\int_0^1\int_{\grM(X)}|g(\alp,\bet)|^8\d\alp\d\bet 
\ll X^{5+\eps}.
\end{equation}
Since $[0,1)$ is the union of $\grM(X)$ and $\grm(X)$, the upper bound $I_9(X)\ll X^{5+\eps}$ follows 
by combining (\ref{2.6}) and (\ref{2.7}). Finally, by interpolating between the bound just obtained and 
Hua's estimate $I_6(X)\ll X^{3+\eps}$ via H\"older's inequality, one obtains
$$I_8(X)\le (I_6(X))^{1/3}(I_9(X))^{2/3}\ll (X^{3+\eps})^{1/3}(X^{5+\eps})^{2/3}=X^{13/3+\eps}.
$$
This completes the proof of Theorem \ref{theorem1.1}.
\end{proof}

Our last task in this section is that of establishing the minor arc bounds recorded in Theorem 
\ref{theorem1.2}.

\begin{proof}[The proof of Theorem \ref{theorem1.2}]
Suppose that $Q$ is a real number with $1\le Q\le X$, and write $\grK(Q)=\grM(X)\setminus \grM(Q)$. 
Then since $\grm(Q)$ is the union of $\grm(X)$ and $\grK(Q)$, one finds that
$$I_{12}^*(X;Q)\le I_{12}^*(X;X)+\Bigl( \sup_{\alp \in \grK(Q)}|g(\alp,\bet)|\Bigr)^4\int_0^1 
\int_{\grM(X)}|g(\alp,\bet)|^8\d\alp \d\bet .$$ 
When $\alp\in \grM(q,a)\cap \grK(Q)$, it follows that $q+X^3|q\alp-a|>Q$. Thus we deduce from 
(\ref{2.5}) that
$$\sup_{\alp \in \grK(Q)}|g(\alp,\bet)|\ll X^{1+\eps}(Q^{-1}+X^{-1})^{1/4}\ll 
X^{1+\eps}Q^{-1/4}.$$
Consequently, we find from Theorem \ref{theorem2.1} and Lemma \ref{lemma2.3} that
$$I_{12}^*(X;Q)\ll X^{7+\eps}+(X^{4+\eps}Q^{-1})(X^{4+\eps})\ll X^{8+2\eps}Q^{-1}.$$
This confirms the second estimate recorded in Theorem \ref{theorem1.2}. For the first, we apply H\"older's 
inequality to interpolate between the bound just obtained, and the second estimate asserted by Theorem 
\ref{theorem1.1}. Thus one has
\begin{align*}
I_{10}^*(X;Q)&\le (I_9(X))^{2/3}(I_{12}^*(X;Q))^{1/3}\\
&\ll (X^{5+\eps})^{2/3}(X^{8+\eps}Q^{-1})^{1/3}
=X^{6+\eps}Q^{-1/3}.
\end{align*}
This completes the proof of Theorem \ref{theorem1.2}.
\end{proof}

\section{A variant of Weyl's inequality} We turn in this section to the proof of Theorem \ref{theorem1.3}, 
and begin by recalling the key elements of the work of Heath-Brown \cite{HB1988} concerning a hybrid of 
the methods of Weyl and of Vinogradov. For the present, suppose that $k\ge 4$, and consider the 
exponential sum $f(\alp)=f_k(\alp;X)$. For each integer $m$, let $\grI(m)$ denote the real interval 
$[mX^{-3},(m+1)X^{-3})$. Given a real number $x$, we then denote by $m=m(x)$ the integer for which 
$x\in \grI(m)$, and we put $\calI(x)=\grI(m(x))$. Finally, we define
$$T(x)=\max_{I\subseteq [1,X]}\sup_{\alp \in \calI(x)}\max_{\bet\in [0,1]}\Bigl| 
\sum_{n\in I}e(\alp n^3+\bet n)\Bigr|,$$
in which the first maximum is taken over subintervals of $[1,X]$.\par

Write $\kap=\tfrac{1}{6}k!2^{k-3}$. Then \cite[Lemma 1]{HB1988} asserts that
\begin{equation}\label{3.1}
|f(\alp)|^{2^{k-3}}\ll X^{2^{k-3}-1}+X^{2^{k-3}-k+2+\eps}\sum_{h=1}^{\kap X^{k-3}}T(\alp h).
\end{equation}
Moreover, the discussion of \cite{HB1988} leading just beyond \cite[Lemma 4]{HB1988} reveals that for 
some real number $\bet=\bet(x)$, one has
\begin{equation}\label{3.2}
T(x)\ll (\log X)\sum_{l=0}^4X^{4-l}\int_{\calI(x)}\int_\bet^{\bet +X^{-1}}
\Bigl| \sum_{1\le n\le X}n^le(\xi n^3+\eta n)\Bigr| \d\eta \d\xi .
\end{equation}

\par The relation (\ref{3.2}) is the starting point for the main discussion of this section. For ease of 
discussion, and without loss of generality, we may suppose that $X$ is an integer. Our first step is to remove 
the weight $n^l$ from the innermost sum of (\ref{3.2}). On recalling (\ref{1.3}), we find by applying 
partial summation that
\begin{align*}
\sum_{1\le n\le X}n^le(\xi n^3+\eta n)&=\sum_{1\le n\le X}n^l\left( g(\xi,\eta;n)-g(\xi,\eta;n-1)\right) \\
&=X^lg(\xi,\eta;X)-\sum_{1\le n\le X-1}((n+1)^l-n^l)g(\xi,\eta;n).
\end{align*}
On substituting this relation into (\ref{3.2}), we deduce that
\begin{align*}
T(x)\ll &\, (\log X)\sum_{l=0}^4 X^{4-l}\sum_{1\le n\le X}n^{l-1}\int_{\calI(x)}
\int_\bet^{\bet +X^{-1}}|g(\xi,\eta;n)|\d\eta \d\xi \\
&\, +(\log X)\sum_{l=0}^4X^4\int_{\calI(x)}\int_\bet^{\bet+X^{-1}}|g(\xi,\eta;X)|\d\eta \d\xi ,
\end{align*}
and hence
\begin{align*}
T(x)\ll &\,X^{3+\eps}\sum_{1\le P\le X}\int_{\calI(x)}\int_\bet^{\bet+X^{-1}}|g(\xi,\eta;P)|\d\eta \d\xi 
\\
&\, +X^{4+\eps}\int_{\calI(x)}\int_\bet^{\bet+X^{-1}}|g(\xi,\eta;X)|\d\eta \d\xi .
\end{align*}
Thus we conclude that
\begin{equation}\label{3.3}
\sum_{h=1}^{\kap X^{k-3}}T(\alp h)\ll X^{4+\eps}\max_{1\le P\le X}
\sum_{h=1}^{\kap X^{k-3}}\Ttil(\alp h;P),
\end{equation}
where
$$\Ttil(x;P)=\int_{\calI(x)}\int_\bet^{\bet+X^{-1}}|g(\xi,\eta;P)|\d\eta \d\xi .$$

We must now consider the double integral $\Ttil(x;P)$, though we pause first to discuss some basic 
properties of the set $\calI(x)$. Let $x\in \dbR$, and suppose that $\calI(x)$ contains a point $\xi$ lying in 
$\grM(\frac{1}{12}P)$. Then there exists $a\in \dbZ$ and $q\in \dbN$ with $0\le a\le q\le \tfrac{1}{12}P$, 
$(a,q)=1$ and $|q\xi-a|\le \tfrac{1}{12}P^{-2}$. For all other points $\xi'\in \calI(x)$, one has
$$|q\xi'-a|\le q|\xi'-\xi|+\tfrac{1}{12}P^{-2}\le qX^{-3}+\tfrac{1}{12}P^{-2}\le \tfrac{1}{6}P^{-2}.$$
Hence we obtain the relation $\calI(x)\subseteq \grM(\tfrac{1}{6}P)$. We record for future reference also 
the bound
\begin{equation}\label{3.4}
\int_{\calI(x)}\int_\bet^{\bet+X^{-1}}\d\eta \d\xi \ll X^{-4}.
\end{equation}

\par Denote by $\grA(P)$ the set of integers $m$ with $1\le m\le X^3$ for which one has 
$\calI(mX^{-3})\cap \grM(\tfrac{1}{12}P)\ne \emptyset$, and define
$$G_m(\bet;P)=\int_{\grI(m)}\int_\bet^{\bet+X^{-1}}|g(\xi,\eta;P)|\d\eta\d\xi .$$
Thus, we have
\begin{equation}\label{3.5}
G_m(\bet;P)=\Ttil(mX^{-3};P).
\end{equation}
Then on recalling (\ref{3.4}), we find that an application of H\"older's inequality delivers the bound
$$\sum_{m\in \grA(P)}G_m(\bet;P)^8\ll X^{-28}
\sum_{m\in \grA(P)}\int_{\grI(m)}\int_\bet^{\bet+X^{-1}}|g(\xi,\eta;P)|^8\d\eta\d\xi .$$
But $\calI(mX^{-3})\subseteq \grM(\tfrac{1}{6}P)$ whenever $m\in \grA(P)$, and hence we obtain the 
relation
$$\sum_{m\in \grA(P)}G_m(\bet;P)^8\ll X^{-28}\int_{\grM(\frac{1}{6}P)}
\int_0^1|g(\xi,\eta;P)|^8\d\eta \d\xi .$$
We thus conclude from Lemma \ref{lemma2.3} and (\ref{3.5}) that
\begin{equation}\label{3.6}
\sum_{m\in \grA(P)}\Ttil(mX^{-3};P)^8=\sum_{m\in \grA(P)}G_m(\bet;P)^8\ll X^{\eps-24}.
\end{equation}

\par Meanwhile, when $\calI(mX^{-3})\cap \grM(\tfrac{1}{12}P)=\emptyset$, one has 
$\calI(mX^{-3})\subseteq \grm(\tfrac{1}{12}P)$. Then we find in a similar manner that
\begin{align*}
\sum_{\substack{1\le m\le X^3\\ m\not\in \grA(P)}}G_m(\bet;P)^{12}&\ll X^{-44}
\sum_{\substack{1\le m\le X^3\\ m\not\in \grA(P)}}\int_{\grI(m)}
\int_\bet^{\bet+X^{-1}}|g(\xi,\eta;P)|^{12}\d\eta \d\xi \\
&\ll X^{-44}\int_{\grm (\frac{1}{12}P)}\int_0^1|g(\xi,\eta;P)|^{12}\d\eta \d\xi .
\end{align*}
We thus conclude from Theorem \ref{theorem2.1} and (\ref{3.5}) that
\begin{equation}\label{3.7}
\sum_{\substack{1\le m\le X^3\\ m\not\in \grA(P)}}\Ttil(mX^{-3};P)^{12}=
\sum_{\substack{1\le m\le X^3\\ m\not\in \grA(P)}}G_m(\bet;P)^{12}\ll X^{\eps-37}.
\end{equation}

\par Next, define
$$\calT(m)=\bigcup_{l=-\infty}^\infty \grI(m+X^3l),$$
write $\calS(m)$ for the set of integers $h$ with $1\le h\le \kap X^{k-3}$ for which one has 
$\alp h\in \calT(m)$, and denote by $K(m)$ the cardinality of $\calS(m)$. We then take $\calS_1(P)$ to be 
the union of the sets $\calS(m)$ over integers $m$ with $1\le m\le X^3$ satisfying 
$\calI(mX^{-3})\cap \grM(\tfrac{1}{12}P)\ne \emptyset$, and $\calS_2(P)$ the corresponding 
union where instead $m$ satisfies $\calI(mX^{-3})\cap \grM(\tfrac{1}{12}P)=\emptyset$.\par

An application of H\"older's inequality reveals that
\begin{align*}
\Bigl( \sum_{h\in \calS_1(P)}\Ttil(\alp h;P)\Bigr)^8&\ll (X^{k-3})^7\sum_{h\in \calS_1(P)}
\Ttil(\alp h;P)^8\\
&\ll X^{7k-21}\sum_{m\in \grA(P)}K(m)\Ttil(mX^{-3};P)^8.
\end{align*}
Thus, by (\ref{3.6}), we see that
\begin{align*}
\Bigl( \sum_{h\in \calS_1(P)}\Ttil(\alp h;P)\Bigr)^8&\ll X^{7k-21}\Bigl( \max_{1\le m\le X^3}K(m)\Bigr) 
\sum_{m\in \grA(P)}\Ttil(mX^{-3};P)^8\\
&\ll X^{7k-45+\eps}\max_{1\le m\le X^3}K(m),
\end{align*}
whence
\begin{equation}\label{3.8}
\sum_{h\in \calS_1(P)}\Ttil(\alp h;P)\ll \left( X^{7k-45+\eps}\max_{1\le m\le X^3}K(m)\right)^{1/8}.
\end{equation}
Similarly, one finds that
\begin{align*}
\Bigl( \sum_{h\in \calS_2(P)}\Ttil(\alp h;P)\Bigr)^{12}&\ll (X^{k-3})^{11}
\sum_{h\in \calS_2(P)}\Ttil(\alp h;P)^{12}\\
&\ll X^{11k-33}\sum_{\substack{1\le m\le X^3\\ m\not\in \grA(P)}}K(m)\Ttil(mX^{-3};P)^{12}.
\end{align*}
Thus, by (\ref{3.7}), we obtain
\begin{align*}
\Bigl( \sum_{h\in \calS_2(P)}\Ttil(\alp h;P)\Bigr)^{12}&\ll X^{11k-33}
\Bigl( \max_{1\le m\le X^3}K(m)\Bigr) 
\sum_{\substack{1\le m\le X^3\\ m\not\in \grA(P)}}\Ttil(mX^{-3};P)^{12}\\
&\ll X^{11k-70+\eps}\max_{1\le m\le X^3}K(m),
\end{align*}
so that
\begin{equation}\label{3.9}
\sum_{h\in \calS_2(P)}\Ttil(\alp h;P)\ll \left( X^{11k-70+\eps}\max_{1\le m\le X^3}K(m)\right)^{1/12}.
\end{equation}

Suppose now that $k\ge 6$, and that $\alp\in \dbR$, $a\in \dbZ$ and $q\in \dbN$ satisfy $(a,q)=1$ and 
$|\alp-a/q|\le q^{-2}$. Then one finds from \cite[Lemma 6]{HB1988} that $K(m)\ll \Tet X^{k-3}$, where 
$\Tet=q^{-1}+X^{-3}+qX^{-k}$. Thus, on combining (\ref{3.3}), (\ref{3.8}) and (\ref{3.9}), we deduce 
that
\begin{align*}
\sum_{h=1}^{\kap X^{k-3}}T(\alp h)&\ll X^{4+\eps}
\left( (X^{8k-48}\Tet)^{1/8}+(X^{12k-73}\Tet)^{1/12}\right) \\
&\ll X^{k-2+\eps}\left( \Tet^{1/8}+(\Tet/X)^{1/12}\right) .
\end{align*}
Finally, on substituting this estimate into (\ref{3.1}), we conclude that
$$f(\alp)\ll X^{1-2^{3-k}}+X^{1+\eps}\left( \Tet^{2^{-k}}+(\Tet/X)^{\frac{2}{3}2^{-k}}\right).$$
Since $\Tet\ge X^{-3}$, the conclusion of Theorem \ref{1.2} now follows.

\bibliographystyle{amsbracket}

\begin{thebibliography}{18}

\bibitem{Bok1994}
K. D. Boklan, \emph{The asymptotic formula in Waring's problem}, Mathematika \textbf{41} (1994), no. 2, 
329--347.

\bibitem{dlB2007}
R. de la Bret\`eche, \emph{R\'epartition des points rationnels sur la cubique de Segre}, Proc. London Math. 
Soc. (3) \textbf{95} (2007), no. 1, 69--155.

\bibitem{BR2014}
J. Br\"udern and O. Robert, \emph{Rational points on linear slices of diagonal hypersurfaces}, submitted.

\bibitem{HB1988}
D. R. Heath-Brown, \emph{Weyl's inequality, Hua's inequality, and Waring's problem}, J. London Math. Soc. (2) \textbf{38} 
(1988), no. 2, 216--230.

\bibitem{Hua1947}
L.-K. Hua, \emph{The additive prime number theory}, Trav. Inst. Math. Stekloff, \textbf{22}, Acad. Sci. USSR, Moscow-Leningrad, 1947.

\bibitem{Seg1887}
C. Segre, \emph{Sulla variet\`a cubica con dieci punti doppii dello spazio a quattro dimensioni}, Atti R. Acc. 
Sci. Torino \textbf{22} (1887), 791--801.

\bibitem{Vau1997}
R. C. Vaughan, \emph{The Hardy-Littlewood method}, Cambridge University Press, Cambridge, 1997.

\bibitem{VW1995}
R. C. Vaughan and T. D. Wooley, \emph{On a certain nonary cubic form and related equations}, Duke Math. J. \textbf{80} 
(1995), no. 3, 669--735.

\bibitem{Woo2012b}
T. D. Wooley, \emph{The asymptotic formula in Waring's problem}, Internat. Math. Res. Notices (2012), 
no. 7, 1485--1504.

\bibitem{Woo2014b}
T. D. Wooley, \emph{The cubic case of the main conjecture in Vinogradov's mean value theorem}, 
submitted; arXiv:1401.3150.

\end{thebibliography}
\providecommand{\bysame}{\leavevmode\hbox to3em{\hrulefill}\thinspace}

\end{document}